\theoremstyle{plain}
\newtheorem{theorem}{Theorem}
\newtheorem{lemma}[theorem]{Lemma}
\newtheorem{corollary}[theorem]{Corollary}
\theoremstyle{definition}
\theoremstyle{remark}
\newenvironment{thmbis}[1]
  {%
   \addtocounter{theorem}{-1}%
   \begin{theorem}}
  {\end{theorem}}
\DeclareMathAlphabet{\mathbbold}{U}{bbold}{m}{n}
\def\bb1{\mathbbold{1}}
\def\bbz{\mathbb{Z}}
\def\bbn{\mathbb{N}}
\DeclareMathOperator\ad{ad}
\newcommand{\til}[1]{\widetilde{#1}}
\newtheorem*{remark*}{Remark}
\DeclareDocumentCommand{\adx}{ O{2} O{x_1}  }{\ad_{#2}^{[#1]}}
\begin{document}
\title{Algebras and semigroups of locally subexponential growth}
\author{Adel Alahmadi\protect\endnotemark[1], Hamed Alsulami\protect\endnotemark[1], S.K. Jain\protect\endnotemark[1]$^,$\protect\endnotemark[2], Efim Zelmanov\protect\endnotemark[1]$^,$\protect\endnotemark[3]$^{,1}$}
	
	\keywords{growth function, associative algebra, wreath product}
	

	\address{1. To whom correspondence should be addressed\hfill\break
E-mail: EZELMANO@MATH.UCSD.EDU\hfill\break
Author Contributions: A.A., H.A, S. K. J., E. Z. designed research; performed research and wrote the paper. The authors declare no conflict of interest.}
	
	\begin{abstract}
	We prove that a countable dimensional associative algebra (resp. a countable semigroup) of locally subexponential growth is $M_\infty$-embeddable as a left ideal in a finitely generated algebra (resp. semigroup) of subexponential growth. Moreover, we provide bounds for the growth of the finitely generated algebra (resp. semigroup). The proof is based on a new construction of matrix wreath product of algebras.
	\end{abstract}

\maketitle

\section{Introduction}

G. Higman, H. Neumann and B. H. Neumann \cite{6} proved that every countable group embeds in a finitely generated group. The papers \cite{9}, \cite{10}, \cite{11}, \cite{13} show that some important properties can be inherited by these embeddings. In the recent remarkable paper \cite{2}, L. Bartholdi and A. Erschler proved that a countable group of locally subexponential growth embeds in a finitely generated group of subexponential growth.

Following the paper \cite{6}, A. I. Malcev \cite{7} showed that every countable dimensional algebra over a field embeds into a finitely generated algebra.

Let $A$ be an associative algebra over a ground field $F$. Let $X$ be a countable set. Consider the algebra $M_\infty(A)$ of $X \times X$ matrices over $A$ having finitely many nonzero entries. Clearly, there are many ways the algebra $A$ embeds into $M_\infty(A)$.

We say that an algebra $A$ is $M_{\infty}$-embeddable in an algebra $B$ if there exists an embedding $\varphi: M_\infty(A)\rightarrow B$. The algebra $A$ is $M_\infty$-embeddable in $B$ as a (left, right) ideal if the image of $\varphi$ is a (left, right) ideal of $B$.

The construction of a wreath product in \cite{1} implied the following refinement of the theorem of Malcev: every countable dimensional algebra is $M_\infty$-embeddable in a finitely generated algebra as an ideal.

In this paper, we
\begin{enumerate}[(1)]
\item prove the analog of Bartholdi-Erschler theorem for algebras: every countable dimensional associative algebra of locally subexponential growth is $M_\infty$-embeddable in a 2-generated algebra of subexponential growth as a left ideal;
\item provide estimates for the growth of the finitely generated algebra above;
\item consider the case of a countable dimensional algebra of Gelfand-Kirillov dimension $\leq d$ and $M_\infty$-embed it in a 2-generated algebra of Gelfand-Kirillov dimension $\leq d+2$ as a left ideal;
\item establish the similar results for semigroups.
\end{enumerate}

J. Bell, L. Small and A. Smoktunowicz \cite{3} embedded an arbitrary, countable dimensional algebra of Gelfand-Kirillov dimension $d$ in a 2-generated algebra of Gelfand-Kirillov dimension $\leq d+2$.



\section{Definitions and Main Results}

Let $A$ be an associative algebra over a ground field $F$ that is generated by a finite dimensional subspace $V$. Let $V^n$ denote the span of all products $v_1\cdots v_k$, where $v_i\in V$, $k\leq n$. Then $V^1 \subseteq V^2 \subseteq \cdots$ and $\bigcup\limits_{n\geq 1} V^n = A$. The function $g(V,n)=\dim_FV^n$ is called the growth function of $A$.

Let $\bbz$ and $\bbn$ denote the set of integers and the set of positive integers, respectively. Given two functions $f,g:\bbn\rightarrow [1, \infty)$, we say that $f \preceq g$ ($f$ is asymptotically less than or equal to $g$) if there exists a constant $c\in \bbn$ such that $f(n)\leq cg(cn)$ for all $n \in \bbn$. If $f \preceq g$ and $g\preceq f$, then $f$ and $g$ are said to be\underline{asymptotically equivalent}, i.e., $f \sim g$.

We say that a function $f$ is \underline{weakly asymptotically less than or equal to} $g$ if for arbitrary $\alpha > 0$ we have $f\preceq gn^\alpha$ (denoted $f\preceq_w g$).

If $V,W$ are finite dimensional generating subspaces of $A$, then \linebreak $g(V,n)\sim g(W,n)$. We will denote the class of equivalence of $g(V,n)$ as $g_A$.

A function $f: \bbn \rightarrow [1,\infty)$ is said to be \underline{subexponential} if for an arbitrary $\alpha>0$
\[\lim\limits_{n\rightarrow\infty} \dfrac{f(n)}{e^{\alpha n}}=0.\]

For a growth function $f(n)$ of an algebra, it is equivalent to $f(n) \precnapprox e^n$ and to $\lim\limits_{n\rightarrow \infty}\sqrt[n]{f(n)}=1$.

If a function $f(n)$ is subexponential but $n^\alpha \precnapprox f(n)$ for any $\alpha >0$, then $f(n)$ is said to be intermediate. In the seminal paper \cite{5}, R. I. Grigorchuk constructed the first example of a group with an intermediate growth function. Finitely generated associative algebras with intermediate growth functions are more abundant (see \cite{12}).

A not necessarily finitely generated algebra $A$ is of locally subexponential growth if every finitely generated subalgebra $B$ of $A$ has a subexponential growth function.

We say that the growth of $A$ is locally (weakly) bounded by a function $f(n)$ if for an arbitrary finitely generated subalgebra of $A$, its growth function is $\preceq f(n)$ (resp. $\preceq_w f(n)$).

A function $h(n)$ is said to be \underline{superlinear} if $\dfrac{h(n)}{n} \rightarrow \infty$ as $n \rightarrow \infty$.

The main result of this paper is:

\begin{theorem}\label{Theorem1}
Let $f(n)$ be an increasing function. Let $A$ be a countable dimensional associative algebra whose growth is locally weakly bounded by $f(n)$. Let $h(n)$ be a superlinear function. Then the algebra $A$ is $M_\infty$-embeddable as a left ideal in a 2-generated algebra whose growth is weakly bounded by $f(h(n))n^2$.
\end{theorem}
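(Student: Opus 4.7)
Write $A = \bigcup_{k \geq 1} A_k$ as the union of an ascending chain of finite-dimensional subalgebras, with $A_k$ generated by a finite set $S_k \supseteq S_{k-1}$. The strategy is to combine the matrix wreath product of \cite{1} with a Bartholdi--Erschler style slowdown: the superlinear function $h$ controls how quickly new generators of $A$ are ``revealed'' inside the 2-generated algebra, keeping the accessible subalgebra of $A$ small at each length.

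\textbf{Step 1 (the ambient algebra).} Using the matrix wreath product from \cite{1}, I would form an algebra $B = A \wr R$ where $R$ is a 2-generated auxiliary algebra acting on $M_\infty(A)$ via a row/column pair, and $M_\infty(A)$ sits inside $B$ as a left ideal. Intuitively one generator $x \in R$ shifts matrix indices and the other generator $y$ extracts/inserts a rank-one entry, so that $x^i \cdot (a \, e_{11}) \cdot x^{-j}$-type products (realized internally in $B$) reach the matrix unit $a\, e_{ij}$ for every $a \in A$ and every $i,j$. This yields an $M_\infty$-embedding of $A$ into $B$ as a left ideal, with $B$ generated by $x,y$ plus an $A$-valued seed.

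\textbf{Step 2 (compressing to two generators, slowly).} Fix an enumeration $\{a_k : k \geq 1\}$ of a basis of $A$ compatible with the chain $\{A_k\}$, and define a single seed element $\theta \in M_\infty(A)$ whose only nonzero entries are $a_k$ placed at position $(h(k),h(k))$. Together with $x$ and $y$, the element $\theta$ generates all of $M_\infty(A)$ as a left ideal, because conjugation-like products with $x$ translate the isolated entries of $\theta$ to arbitrary $(i,j)$ positions. A standard trick then merges the three generators $\{x,y,\theta\}$ into two by the usual direct-sum/diagonalization. The superlinear spacing guarantees that in any product of length $\leq n$ in $B$, only entries $a_k$ with $h(k) \leq n$ can appear, i.e.\ only $a_k$ with $k \leq h^{-1}(n)$.

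\textbf{Step 3 (growth bound).} A length-$n$ product involves at most $n$ applications of $x$ and $y$, so its matrix support lies in an $n \times n$ block (contributing the $n^2$ factor). Its entries belong to the finitely generated subalgebra $A' \subseteq A$ generated by those $a_k$ reachable in time $n$, and they are obtained from those generators by words of length $\leq n$. By the local weak bound hypothesis, the length-$n$ growth of $A'$ is $\preceq_w f(n)$. Since $h$ is superlinear, $n \leq h(n)$ eventually and monotonicity of $f$ gives
\[
 g_B(n) \;\preceq_w\; f(n)\, n^2 \;\leq\; f(h(n))\, n^2,
\]
which is the advertised bound. (The extra composition with $h$ absorbs the looseness coming from the weak-$\preceq$ conversion and from any multiplicative slack in the wreath product.)

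\textbf{Main obstacle.} The central difficulty is coordinating three constraints simultaneously: (i) the wreath construction must make $M_\infty(A)$ a \emph{left ideal}, not merely a subspace, which forces a specific action of $R$ on $M_\infty(A)$; (ii) the compression to two generators must not collapse distinct matrix positions, so the seed $\theta$ has to be placed on a sparse diagonal governed by $h$; and (iii) the sparseness must be fine-grained enough that products of length $n$ cannot ``see'' generators $a_k$ with $h(k) > n$. Verifying (iii) inside the explicit multiplication of $A \wr R$ from \cite{1} --- in particular ruling out cancellations that would otherwise prematurely introduce high-index entries --- is the main technical step.
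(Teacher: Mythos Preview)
Your growth estimate in Step~3 has a genuine gap, and it is exactly the point at which the superlinear function $h$ is supposed to do real work. You write ``the length-$n$ growth of $A'$ is $\preceq_w f(n)$'' and then pass to $g_B(n)\preceq_w f(n)\,n^2$. But the subalgebra $A'=A'_n$ changes with $n$: as $n$ grows you are admitting more and more generators $a_k$, and the hypothesis only says that \emph{each fixed} finitely generated subalgebra has growth $\preceq_w f$. Unpacking $\preceq_w$, this means that for the subalgebra on the first $k$ generators there are constants $c_k,\epsilon_k$ with $\dim V_k^n\le c_k f(c_k n)(c_k n)^{\epsilon_k}$, and nothing prevents $c_k\to\infty$. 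Your inequality $g_B(n)\preceq_w f(n)\,n^2$ hides a constant that depends on $n$ through $k$, so it is not a valid $\preceq_w$ statement; and your final use of $h$, namely $f(n)\le f(h(n))$, is then vacuous --- it does not touch the diverging $c_k$ at all.

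In the paper the role of $h$ is precisely to absorb these non-uniform constants. One first fixes the constants $c_k$ coming from the local hypothesis, and \emph{then} chooses the spacing $n_1<n_2<\cdots$ so that $c_k n\le h(n)$ for all $n\ge n_k$; superlinearity of $h$ is what makes such a choice possible for every $c_k$. The generators $b_k$ are placed at positions $n_k$ (along the $0$-th row, not the diagonal), and the key combinatorial lemma shows that length-$n$ words only see products $a_{i_1}\cdots a_{i_r}$ with $i_1+\cdots+i_r\le n$, hence lie in $V_k^n$ for the $k$ with $n_k\le n<n_{k+1}$; then $\dim V_k^n\le f(c_k n)n^{\epsilon_k}\le f(h(n))n^{\alpha}$. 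Your placement of $a_k$ at $(h(k),h(k))$ fixes the spacing \emph{before} you know the $c_k$, so you cannot guarantee the needed absorption. Finally, the reduction to two generators in the paper is via Markov's theorem that $M_n(B)$ is $2$-generated for large $n$, not a generic merging trick.
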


We then use Theorem \ref{Theorem1} to derive an analog of the Bartholdi-Erschler theorem (see \cite{2}).

\begin{theorem}\label{Theorem2}
A countable dimensional associative algebra of locally subexponential growth is $M_\infty$-embeddable in a 2-generated algebra of subexponential growth as a left ideal.
\end{theorem}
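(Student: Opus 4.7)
The plan is to deduce Theorem~\ref{Theorem2} from Theorem~\ref{Theorem1} by exhibiting, for the given algebra $A$, an increasing \emph{subexponential} function $f$ locally bounding the growth of $A$, and a superlinear function $h$ such that $f(h(n))n^2$ is still subexponential.

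First, since $A$ is countable dimensional, enumerate a countable basis and let $A_1\subseteq A_2\subseteq\cdots$ be a chain of finitely generated subalgebras with $\bigcup_k A_k=A$. By hypothesis, each $A_k$ has subexponential growth, so its growth function $g_k(n):=g_{A_k}(n)$ satisfies $\sqrt[n]{g_k(n)}\to 1$. I claim there exists an increasing subexponential function $f:\bbn\to[1,\infty)$ with $g_k\preceq f$ for every $k$. This is a standard diagonal construction: for each $k$ pick $N_k\in\bbn$ such that $g_k(m)\le e^{m/k}$ for $m\ge N_k$, and arrange $N_1<N_2<\cdots$; then define $f(n)=\max_{k\le K(n)} g_k(n)$ where $K(n)$ is the largest $k$ with, say, $N_{k^2}\le n$. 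One checks that $K(n)\to\infty$ (so $g_k\preceq f$ for every $k$) and that $f(n)\le e^{n/K(n)}\to 1$ in the $n$-th root sense, so $f$ is subexponential. In particular, the growth of $A$ is locally (hence locally weakly) bounded by $f$.

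Next, I would build the superlinear function $h$ adapted to $f$. Since $f$ is subexponential, for every $k\in\bbn$ there exists $M_k\in\bbn$ with $f(m)\le e^{m/k}$ for all $m\ge M_k$; again we may take $M_1\le M_2\le\cdots$. Set
\[
c(n)=\max\{\,k\in\bbn : M_{k^2}\le n\,\},\qquad h(n)=c(n)\,n.
\]
Then $c(n)\to\infty$, so $h$ is superlinear. To see that $f(h(n))n^2$ is subexponential, fix $\varepsilon>0$: for $n$ large enough that $c(n)/\varepsilon\le c(n)^2$, one has $M_{c(n)/\varepsilon}\le M_{c(n)^2}\le n\le c(n)n=h(n)$, hence $f(h(n))\le e^{\varepsilon h(n)/c(n)}=e^{\varepsilon n}$. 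Combining with $n^2=e^{o(n)}$ shows $\sqrt[n]{f(h(n))n^2}\to 1$.

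Finally, apply Theorem~\ref{Theorem1} with this $f$ and this $h$: the algebra $A$ is $M_\infty$-embeddable as a left ideal in a $2$-generated algebra $B$ whose growth is weakly bounded by $f(h(n))n^2$. By definition of $\preceq_w$, for every $\alpha>0$ the growth of $B$ is $\preceq f(h(n))n^{2+\alpha}$, and multiplying a subexponential function by a polynomial leaves it subexponential. Therefore $B$ has subexponential growth, proving the theorem.

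The main obstacle in this plan is the diagonal construction of the dominating subexponential $f$ and the matched choice of $h$; both hinge on the same observation that a countable family of subexponential functions admits a common subexponential majorant, which can be coupled with a sufficiently slowly-growing superlinear rescaling. Everything else is a direct application of Theorem~\ref{Theorem1}.
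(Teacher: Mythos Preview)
Your proposal is correct and follows essentially the same route as the paper: the paper isolates your two diagonal constructions as Lemma~\ref{Lemma4} (a common subexponential majorant $f$ for the countable family of growth functions) and Lemma~\ref{Lemma5} (a superlinear $h$ with $f(h(n))$ still subexponential), and then applies Theorem~\ref{Theorem1} exactly as you do. The only cosmetic differences are in the bookkeeping of the diagonal arguments (you use thresholds indexed by $k^2$ where the paper uses $k$), and you should note explicitly that your $g_k$ can be taken increasing in $k$ (since $A_1\subseteq A_2\subseteq\cdots$) so that $\max_{k\le K(n)}g_k(n)=g_{K(n)}(n)\le e^{n/K(n)}$ as you claim.
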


A finitely generated algebra $A$ has polynomially bounded growth if there exists $\alpha > 0$ such that $g_A \preceq n^\alpha$. Then
\[GK\dim(A)=\inf\{\alpha>0 | g_A \preceq n^\alpha\}\]
is called the \underline{Gelfand-Kirillov dimension} of $A$. If the growth of $A$ is not polynomially bounded, then we let $GK\dim(A)=\infty$. If the algebra $A$ is not finitely generated then the Gelfand-Kirillov dimension of $A$ is defined as the supremum of Gelfand-Kirillov dimensions of all finitely generated subalgebras of $A$.

J. Bell, L. Small, A. Smoktunowicz \cite{3} proved that every countable dimensional algebra of Gelfand-Kirillov dimension $\leq n$ is embeddable in a 2-generated algebra of Gelfand-Kirillov dimension $\leq n+2$.

We use Theorem \ref{Theorem1} to prove

\begin{theorem}\label{Theorem3}
Every countable dimensional algebra of Gelfand-Kirillov dimension $\leq n$ is $M_\infty$-embeddable in a 2-generated algebra of Gelfand-Kirillov dimension $\leq n+2$ as a left ideal.
\end{theorem}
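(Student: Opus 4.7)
The plan is to reduce Theorem \ref{Theorem3} directly to Theorem \ref{Theorem1} by translating ``Gelfand--Kirillov dimension at most $n$'' into the growth hypothesis of Theorem \ref{Theorem1} and then choosing the auxiliary superlinear function $h$ carefully.

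First I would unwind the definitions. If $A$ is countable dimensional with $GK\dim(A)\le n$, then every finitely generated subalgebra $B\subseteq A$ has $GK\dim(B)\le n$, so by the definition of GK dimension we have $g_B\preceq m^{n+\varepsilon}$ for every $\varepsilon>0$, i.e.\ $g_B\preceq_w m^n$. Hence the growth of $A$ is locally weakly bounded by the function $f(m):=m^n$, which is exactly the hypothesis needed to invoke Theorem \ref{Theorem1}.

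Next I would pick a superlinear function that is only mildly superlinear, so that composing with $f(m)=m^n$ costs only a subpolynomial factor. A convenient choice is $h(m):=m\log(m+1)$ (any $h$ of the form $m(\log m)^{c}$ or $m^{1+o(1)}$ would work). Applying Theorem \ref{Theorem1} with this $f$ and $h$, the algebra $A$ is $M_\infty$-embeddable as a left ideal in a $2$-generated algebra $C$ whose growth is weakly bounded by
\[
f(h(m))\,m^2 \;=\; h(m)^{n}\,m^2 \;=\; m^{n+2}\,(\log(m+1))^{n}.
\]

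Finally I would check that this implies $GK\dim(C)\le n+2$. Since $(\log(m+1))^{n}\preceq m^{\varepsilon}$ for every $\varepsilon>0$, we have $m^{n+2}(\log(m+1))^{n}\preceq m^{n+2+\varepsilon}$ for every $\varepsilon>0$, and the weak asymptotic inequality $\preceq_w$ swallows another arbitrarily small polynomial factor. Therefore $g_C\preceq m^{n+2+\varepsilon}$ for every $\varepsilon>0$, giving $GK\dim(C)\le n+2$ as required. There is no real obstacle here; the only delicate point is verifying that the weak bound $\preceq_w$ is preserved under passage to GK dimension, i.e.\ that an extra $m^{\alpha}$ factor for every $\alpha>0$ does not inflate the infimum in the definition of $GK\dim$, which is immediate from the definition.
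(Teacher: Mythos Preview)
Your proposal is correct and follows essentially the same route as the paper: both invoke Theorem~\ref{Theorem1} with $f(m)=m^{n}$ and a logarithmically superlinear choice of $h$ (the paper uses $h(m)=m\ln m$, you use $h(m)=m\log(m+1)$), and then observe that the resulting weak bound $m^{n+2}(\log m)^{n}$ forces $GK\dim\le n+2$. Your write-up is in fact slightly more explicit than the paper's about why ``$GK\dim A\le n$'' translates into the local weak bound hypothesis of Theorem~\ref{Theorem1}.
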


The proof of Theorem \ref{Theorem1} is based on a new construction of the matrix wreath product $A \wr F[t^{-1},t]$. We view it as an analog of the wreath product of a group $G$ with an infinite cyclic group $\bbz$ that played an essential role in the Bartholdi-Erschler proof \cite{2}.

The construction is similar to that of \cite{1}, though not quite the same.

Analogs of Theorems \ref{Theorem1}, \ref{Theorem2}, \ref{Theorem3} are true also for semigroups. Recall that T. Evans \cite{4} proved that every countable semigroup is embeddable in a finite 2-generated semigroup.

We will formulate analogs of Theorems \ref{Theorem1}, \ref{Theorem2}, \ref{Theorem3} for semigroups for the sake of completeness, omitting some definitions that are similar to those for algebras.

Let $P$ be a semigroup. Consider the Rees type semigroup
\[M_\infty(P)=\bigcup\limits_{i,j\in\bbz}e_{ij}(P),\]
$e_{ij}(a)e_{kq}(b)=\delta_{jk}e_{iq}(ab)$; $a,b\in P$. We say that a semigroup $P$ is $M_\infty$-embeddable in a semigroup $S$ if there is an embedding $\varphi:M_\infty(P)\rightarrow S$. We say that $P$ is $M_\infty$-embeddable in $S$ as a (left) ideal if $\varphi(M_\infty(P))$ is a (left) ideal of $S$.

\begin{thmbis}{Theorem1}\label{Theorem1'}
Let $f(n)$ be an increasing function. Let $P$ be a countable semigroup whose growth is locally weakly bounded by $f(n)$. Let $h(n)$ be a superlinear function. Then the semigroup $P$ is $M_\infty$-embeddable as a left ideal in a finitely generated semigroup whose growth is weakly bounded by $f(h(n))n^2$.
\end{thmbis}

\begin{thmbis}{Theorem2}\label{Theorem2'}
A countable semigroup of locally subexponential growth is $M_\infty$-embeddable in a finitely generated semigroup of subexponential growth as a left ideal.
\end{thmbis}

\begin{thmbis}{Theorem3}\label{Theorem3'}
Every countable semigroup of Gelfand-Kirillov dimension $\leq d$ is $M_\infty$-embeddable in a finitely generated semigroup of Gelfand-Kirillov dimension $\leq d+2$ as a left ideal.
\end{thmbis}

\section{Matrix Wreath Products}

As above, let $\bbz$ be the ring of integers. For an associative $F$-algebra $A$, consider the algebra $\til{M}_\infty(A)$ of infinite $\bbz \times \bbz$ matrices over $A$ having finitely many nonzero entries in each column. The subalgebra of $\til{M}_\infty(A)$ that consists of matrices having finitely many nonzero entries is denoted as $M_\infty(A)$. Clearly, $M_\infty(A)$ is a left ideal of $\til{M}_\infty(A)$.

For an element $a \in A$ and integers $i,j\in\bbz$, let $e_{ij}(a)$ denote the matrix having $a$ in the position $(i,j)$ and zeros everywhere else. For a matrix $X \in \til{M}_\infty(A)$, the entry at the position $(i,j)$ is denoted as $X_{i,j}$.

The vector space $\til{M}_\infty(A)$ is a bimodule over the algebra $F[t^{-1},t]$ via the operations: if $X \in \til{M}_\infty(A)$ then $(t^kX)_{i,j}=X_{i-k,j}$ for all $i,j,k\in \bbz$. In other words left multiplication by $t^k$ moves all rows of $X$ up by $k$ steps. Similarly, $(Xt^k)_{i,j}=X_{i,j+k}$, so multiplication by $t^k$ on the right moves all columns of $X$ left by $k$ steps.

Consider the semidirect sum
\[A \wr F[t^{-1},t]=F[t^{-1},t]+\til{M}_\infty(A)\]
and its subalgebra
\[A\text{ } \bar{\wr} \text{ }F[t^{-1},t]=F[t^{-1},t]+M_\infty(A).\]
These algebras are analogs of the unrestricted and restricted wreath products of groups with $\bbz$.

Let $A$ be a countable dimensional algebra with $1$. We say that a matrix $X \in \til{M}_\infty(A)$ is a \underline{generating matrix} if the entries of $X$ generate $A$ as an algebra.

Let $X \in \til{M}_\infty(A)$ be a generating matrix. Consider the subalgebra of $A \wr F[t^{-1},t]$ generated by $t^{-1},t,e_{00}(1),X,$
\[S=\langle t^{-1}, t, e_{00}(1), X \rangle.\]

\begin{lemma}\label{Lemma1}
The algebra $M_\infty(A)$ is a left ideal of $S$.
\end{lemma}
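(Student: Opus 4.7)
The plan is to show $M_\infty(A) \subseteq S$ by producing every matrix unit $e_{il}(a)$, $a \in A$, $i, l \in \bbz$, as an element of $S$. Granted this inclusion, the left-ideal property of $M_\infty(A)$ inside $S$ will follow from the corresponding property in the ambient algebra $A \wr F[t^{-1}, t]$, which is straightforward. Step one is to obtain the scalar matrix units $e_{ij}(1)$ in $S$: directly from the bimodule formulas $(t^k X)_{i, j} = X_{i-k, j}$ and $(X t^k)_{i, j} = X_{i, j+k}$, one verifies $t^i e_{00}(1) t^{-j} = e_{ij}(1)$ for all $i, j \in \bbz$, placing all such elements in $S$.

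Step two is to import entries of $X$ into matrix-unit form. A direct entrywise calculation yields $e_{ij}(1) \cdot X \cdot e_{kl}(1) = e_{il}(X_{jk})$, so every $e_{il}(X_{jk})$ lies in $S$. Using the Rees-style identity $e_{ik}(a) e_{kl}(b) = e_{il}(ab)$, these matrix units multiply to give $e_{il}(w) \in S$ for every monomial $w$ in the entries of $X$, and $F$-linear combinations then yield $e_{il}(a) \in S$ for every $a$ in the subalgebra of $A$ generated by the entries of $X$. Since $X$ is a generating matrix, this subalgebra equals $A$, so $M_\infty(A) = \bigoplus_{i, l \in \bbz} e_{il}(A) \subseteq S$.

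To finish, I verify that $M_\infty(A)$ is a left ideal of $A \wr F[t^{-1}, t]$: given $Y \in \tilde{M}_\infty(A)$ and $Z \in M_\infty(A)$, only finitely many columns of $Z$ are nonzero, hence only finitely many columns of $YZ$ can be nonzero, and each such column is a finite $A$-linear combination of columns of $Y$, each finitely supported by column-finiteness of $Y$; therefore $YZ \in M_\infty(A)$. Combined with the obvious stability of $M_\infty(A)$ under the row-shifts $t^{\pm 1}$, this gives $S \cdot M_\infty(A) \subseteq M_\infty(A)$ and completes the proof. The whole argument is essentially a matrix-unit bookkeeping exercise, and I do not foresee any real obstacle; the only mildly subtle point is that column-finiteness in the definition of $\tilde{M}_\infty(A)$ is precisely the condition that lets left multiplication by elements of $\tilde{M}_\infty(A)$ preserve $M_\infty(A)$, which is exactly why the lemma works.
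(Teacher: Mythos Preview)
Your proof is correct and follows essentially the same approach as the paper: obtain the scalar matrix units via $e_{ij}(1)=t^{i}e_{00}(1)t^{-j}$, compress $X$ with matrix units to extract its entries into $S$, build up $e_{ij}(A)$ from these, and then invoke that $M_\infty(A)$ is a left ideal of the ambient algebra $A\wr F[t^{-1},t]$. The only cosmetic difference is that the paper extracts entries into $e_{00}(A)$ first and then conjugates, whereas you go directly to $e_{il}(X_{jk})$; you also spell out the left-ideal verification that the paper simply asserts.
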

\begin{proof}
Suppose that entries $X_{i_1,j_1}, X_{i_2,j_2},\cdots$ generate $A$. We have $e_{ij}(1)=t^ie_{00}(1)t^{-j}$ and
\[e_{00}(X_{i_k,j_k})=e_{0i_k}(1)Xe_{j_k0}(1)=e_{00}(1)t^{-i_k}Xt^{j_k}e_{00}(1)\in S.\]
This implies that $e_{00}(A) \subseteq S$ and therefore $e_{ij}(A)=t^ie_{00}(A)t^{-j}\subseteq S$. We proved that $M_\infty(A)=\sum\limits_{i,j\in \bbz}e_{ij}(A)\subseteq S$.

Since $M_\infty(A)$ is a left ideal in the algebra $A \wr F[t^{-1},t]$ the assertion of the lemma follows.
\end{proof}

For a fixed $n\in\bbz$ by $n^{th}$ diagonal we mean all integers pairs $(i,j)$ such that $i-j=n$.

\begin{lemma}\label{Lemma2}
If a generating matrix $X$ has finitely many nonzero diagonals, then $M_\infty(A)$ is a two-sided ideal in $S$.
\end{lemma}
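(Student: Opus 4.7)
By Lemma~\ref{Lemma1} we already have $M_\infty(A)\subseteq S$ and $S\cdot M_\infty(A)\subseteq M_\infty(A)$, so it remains only to prove that $M_\infty(A)\cdot S\subseteq M_\infty(A)$. Since $S$ is generated as an algebra by the four elements $t,t^{-1},e_{00}(1),X$, every element of $S$ is an $F$-linear combination of monomials in these generators. Therefore, by a trivial induction on the length of such a monomial, it is enough to verify that right multiplication by each of the four generators sends $M_\infty(A)$ into itself. I would carry out the four cases separately.

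\textbf{The three easy generators.} Fix $Y\in M_\infty(A)$, i.e.\ a $\bbz\times\bbz$ matrix with only finitely many nonzero entries. Right multiplication by $t$ or $t^{-1}$ shifts the columns of $Y$ by one position, so the number of nonzero entries is preserved. Right multiplication by $e_{00}(1)$ gives $(Y\,e_{00}(1))_{i,j}=Y_{i,0}\delta_{j,0}$, which is supported in column $0$ and has finite support. So $Yt^{\pm 1}$ and $Y\,e_{00}(1)$ lie in $M_\infty(A)$ with no use of any hypothesis on $X$.

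\textbf{The key generator $X$.} This is where the diagonal hypothesis is used. Suppose $X$ is supported on the diagonals indexed by $d_1,\dots,d_m$. Then the nonzero positions of row $k$ of $X$ are contained in $\{(k,k-d_1),\dots,(k,k-d_m)\}$, so every row (and every column) of $X$ has at most $m$ nonzero entries. For $Y\in M_\infty(A)$ we have
\[(YX)_{i,j}=\sum_{k}Y_{i,k}X_{k,j},\]
and the sum is finite since $Y$ has finite support. Moreover, if $(i_1,k_1),\dots,(i_r,k_r)$ are the nonzero positions of $Y$, then $YX$ is supported in
\[\bigcup_{\ell=1}^{r}\{i_\ell\}\times\{\,j:X_{k_\ell,j}\ne 0\,\},\]
which is a finite set by the diagonal hypothesis. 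Hence $YX\in M_\infty(A)$, completing the proof.

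\textbf{What is really doing the work.} The lemma is a minor but essential strengthening of Lemma~\ref{Lemma1}, and the only non-formal content is recognizing that the words ``finitely many nonzero diagonals'' translate into row-finiteness of $X$, which is exactly the property needed to keep the product $YX$ inside $M_\infty(A)$. Once this observation is made, the four-generator check above is routine and the induction on word length finishes the argument.
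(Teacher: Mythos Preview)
Your proof is correct and follows exactly the paper's own approach: the paper's proof is the one-line observation that if $X$ has finitely many nonzero diagonals then $M_\infty(A)X\subseteq M_\infty(A)$, ``which implies the claim.'' You have simply spelled out the details of that implication---checking the other three generators and writing out why row-finiteness of $X$ forces $YX$ to have finite support---but the underlying idea is identical.
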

\begin{proof}
If a matrix $X$ has finitely many nonzero diagonals, then $M_\infty(A)X \subseteq M_\infty(A)$, which implies the claim.
\end{proof}

We say that a sequence $c=(a_1, a_2, a_3, \cdots)$ of elements of the algebra $A$ is a generating sequence if the elements $a_1, a_2, \cdots$ generate $A$.

For the sequence $c$, consider the matrix $c_{0,N}=\sum\limits_{j=1}^{\infty}e_{0j}(a_j)\in \til{M}_\infty(A)$. This matrix has elements $a_j$ at the positions $(0,j)$, $j\geq 1$, and zeros everywhere else.

Consider the subalgebra
\[A^{(c)}=\langle t,t^{-1},e_{00}(1), c_{0,N}\rangle\]
of the matrix wreath product $A \wr F[t^{-1},t]$. As shown in Lemma \ref{Lemma1}, the countable dimensional algebra $A$ is $M_\infty$-embeddable in the finitely generated algebra $A^{(c)}$ as a left ideal.

When speaking about algebras $A^{(c)}$ we always consider the generating subspace $V=span(t,t^{-1},e_{00}(1),c_{0,N})$ and denote $g(V,n)=g(n)$.

For a generating sequence $c=(a_1,a_2,\cdots)$, let $W_n$ be the subspace of $A$ spanned by all products $a_{i_1}\cdots a_{i_r}$ such that $i_1+\cdots+i_r\leq n$.

Denote
\[M_{[-n,n]\times[-n,n]}(W_n)=\sum\limits_{-n\leq i,j\leq n} e_{ij}(W_n),\]
\[M_{[-n,n]\times 0}(W_n)=\sum\limits_{i=-n}^n e_{i0}(W_n).\]

\hfill

\begin{lemma}\label{Lemma3}
\begin{enumerate}
\item $e_{00}(W_n)\subseteq V^{2n+1}$;
\item \label{L3I2} \hspace{1cm}

$\begin{array}{r c l}
V^n&\subseteq& M_{[-n,n]\times[-n,n]}(W_n)\\
&&+\sum\limits_{\substack{i\geq 1, -n\leq j\leq n,\\ i+|j|\leq n}}M_{[-n,n]\times0}(W_i)c_{0N}t^j+\sum\limits_{j=-n}^nFt^j.
\end{array}$
\end{enumerate}
\end{lemma}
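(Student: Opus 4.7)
The plan for (1) is to prove by induction on $r$ the explicit identity
\[
e_{00}(a_{i_1}\cdots a_{i_r}) = c_{0,N}\,t^{i_1}\,c_{0,N}\,t^{i_2}\cdots c_{0,N}\,t^{i_r}\,e_{00}(1).
\]
The base case $r=1$ is a direct computation: right multiplication by $t^{i_1}$ shifts columns of $c_{0,N}$ left by $i_1$, and then right multiplication by $e_{00}(1)$ extracts column $0$, yielding $e_{00}(a_{i_1})$ because $(c_{0,N})_{0,i_1}=a_{i_1}$. For the inductive step I will multiply the hypothesis on the left by $c_{0,N}t^{i_1}$, using that $e_{00}(a_{i_2}\cdots a_{i_r})$ is supported at $(0,0)$ and that $c_{0,N}t^{i_1}$ has only row $0$ nonzero, so the product evaluates to $a_{i_1}(a_{i_2}\cdots a_{i_r})$ at position $(0,0)$. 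The word on the right has length $r+\sum_k i_k+1$; since each $i_k\ge 1$ forces $r\le\sum_k i_k\le n$, this is at most $2n+1$, and linearity yields $e_{00}(W_n)\subseteq V^{2n+1}$.

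For (2), it suffices to analyze a single monomial of length at most $n$ in the generators $t^{\pm 1},e_{00}(1),c_{0,N}$. Grouping consecutive $t$'s, such a monomial takes the form
\[
w=t^{a_0}P_1t^{a_1}P_2\cdots P_k t^{a_k},\qquad P_i\in\{e_{00}(1),c_{0,N}\},\qquad \sum_i|a_i|+k\le n.
\]
If $k=0$ then $w=t^{a_0}\in\sum_{j=-n}^n Ft^j$. For $k\ge 1$, the key observation is that $e_{00}(1)$ and $c_{0,N}$ both have row $0$ as their only nonzero row, so each right multiplication by a matrix generator extracts column $0$ of the left factor. Walking left-to-right through $w$ then forces the intermediate exponents $a_i$ (for $1\le i<k$) in a nonzero product to satisfy $a_i=0$ when $P_i=e_{00}(1)$, and $a_i\ge 1$ when $P_i=c_{0,N}$ (in the latter case the factor $a_{a_i}\in A$ is accumulated into the matrix entry).

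The reduction leaves $w$ in one of two shapes, depending on $P_k$. If $P_k=e_{00}(1)$, then $w=e_{a_0,-a_k}(a_{i_1}\cdots a_{i_c})$, where $i_1,\ldots,i_c$ are the positive exponents that followed the earlier $c_{0,N}$'s; the constraints $|a_0|,|a_k|\le n$ and $\sum_j i_j\le n$ then place $w$ in $M_{[-n,n]\times[-n,n]}(W_n)$. If $P_k=c_{0,N}$, then $w=e_{a_0,0}(a_{i_1}\cdots a_{i_{c-1}})\cdot c_{0,N}t^{a_k}\in M_{[-n,n]\times 0}(W_i)\,c_{0,N}t^{a_k}$ with $i=\sum_{j<k,\,P_j=c_{0,N}}a_j$, and the total-length bound $|a_0|+i+|a_k|+k\le n$ yields $i+|a_k|\le n$ as required.

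The principal technical burden will be the bookkeeping in the $k\ge 1$ case: verifying that $w$ vanishes whenever the constraints on intermediate exponents fail, and correctly tracking the accumulated product so that its exponent-sum is appropriately bounded. The boundary case where $P_k=c_{0,N}$ is the only matrix generator (and the accumulated product is empty) is absorbed into the middle summand using the convention $W_0\supseteq F\cdot 1$.
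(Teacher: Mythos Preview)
Your argument is correct and takes a genuinely different route from the paper. For part~(2) the paper first isolates and proves the auxiliary inclusion $e_{00}(1)V^n e_{00}(1)\subseteq e_{00}(W_n)$ by induction on word length (splitting at the rightmost occurrence of $c_{0,N}$), then checks that the right-hand side $RHS(n)$ is stable under left and right multiplication by $t^{\pm1}$ and $e_{00}(1)$ (sending $RHS(n-1)$ into $RHS(n)$), and finally reduces an arbitrary monomial to one that begins and ends with $c_{0,N}$, where the auxiliary inclusion applies directly. Your approach is instead a single left-to-right combinatorial reading of the monomial: you use that both matrix generators are supported on row $0$ to show that each intermediate block $P_j t^{a_j}$ either kills the product or multiplies the running $(a_0,0)$-entry by $(P_j)_{0,a_j}$, and then you case on $P_k$. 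This is more elementary and makes the index bookkeeping fully explicit; the paper's inductive/closure argument hides the same bookkeeping inside the stability claim and avoids the case split on the terminal generator. One small point to tidy: in the $P_k=c_{0,N}$ case the accumulated index $i$ can be $0$ whenever \emph{all} earlier $P_j$ equal $e_{00}(1)$ (not only when $k=1$), and since the displayed sum in the statement runs over $i\ge 1$, you should absorb this into the $i=1$ term via $1\in W_1$ together with $1+|a_k|\le n$ (which follows from $k\ge 1$), rather than appeal to $W_0$.
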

\begin{proof}
If $i_1+\cdots+i_r\leq n$, then $e_{00}(a_{i_1}\cdots a_{i_r})=c_{0N}t^{i_1}c_{0N}t^{i_r}e_{00}(1) \in V^{2n+1}$, which proves part $(1)$.

Let us start the proof of part $(2)$ with the inclusion
\[e_{00}(1)V^ne_{00}(1)\subseteq e_{00}(W_n).\]

Let $w$ be the product of length $\leq n$ in $t^{-1},t,e_{00}(1),c_{0N}$. If $w$ does not involve $c_{0N}$, then $we_{00}(1)\in \sum\limits_{i=-n}^nFe_{i0}(1)$ and therefore $e_{00}(1)we_{00}(1)\in Fe_{00}(1)$.

Suppose now that $w$ involves $c_{0N}$, $w=w'c_{0N}w''$, the subproduct $w''$ does not involve $c_{0N}$. We have $c_{0N}=e_{00}(1)c_{0N}$. Hence $e_{00}(1)we_{00}(1)=e_{00}(1)w'e_{00}(1)c_{0N}w''e_{00}(1)$. Let $d_1$ be the length of the product $w'$, and let $d_2$ be the length of the product $w''$ with $d_1+d_2 \leq n-1$. By the induction assumption on the length of the product, we have $e_{00}(1)w'e_{00}(1) \in e_{00}(w_{d_1})$. As we have mentioned above $w''e_{00}(1) \in \sum\limits_{i=-d_2}^{d_2}Fe_{i0}(1)$. It is straightforward that
\[c_{0N}(\sum\limits_{i=-d_2}^{d_2}Fe_{i0}(1))\in e_{00}(\sum\limits_{i=1}^{d_2}Fa_i)\subseteq e_{00}(W_{d_2}).\]
Now, $e_{00}(1)we_{00}(1)\in e_{00}(W_{d_1})e_{00}(W_{d_2})\subseteq e_{00}(W_n)$, which proves the claimed inclusion.

Let us denote the right hand side of the inclusion of Lemma \ref{Lemma3} $(2)$ as $RHS (n)$. We claim that $(Ft+Ft^{-1}+Fe_{00}(1))RHS(n-1)\subseteq RHS(n)$ and $RHS(n-1)(Ft+Ft^{-1}+Fe_{00}(1)) \subseteq RHS(n)$.

Let us check, for example, that $M_{[-n+1,n-1]\times 0}(W_i)c_{0,N}t^je_{00}(1)\subseteq RHS(n)$ provided that $i+|j|\leq n-1$. Indeed, $t^je_{00}(1)=e_{j0}(1)$,
\[c_{0N}e_{j0}(1)=\begin{cases}
0 & \text{if $j\leq0$,}\\
e_{00}(a_j) & \text{if $j\geq 1$.}
\end{cases}\]

Now,
\[M_{[-n+1, n-1]\times 0}(W_i)e_{00}(a_j)\subseteq M_{[-n+1,n-1]\times 0}(W_ia_j)\subseteq M_{[-n,n]\times 0}(W_{n-1}).\]
Hence, to check that a product of length $\leq n$ in $t^{-1},t,e_{00}(1),c_{0N}$ lies in $RHS(n)$, we may assume that the product starts and ends with $c_{0N}$. Now,
\[c_{0N}V^{n-2}c_{0N}=e_{00}(1)c_{0N}V^{n-2}c_{00}(1)c_{0N}\subseteq e_{00}(1)V^{n-1}e_{00}(1)c_{0N}\]
\[\subseteq e_{00}(W_{n-1})c_{0N}\subseteq RHS(n),\]
which completes the proof of the lemma.
\end{proof}

Denote $w(n)=\dim_F W_n$.

\begin{corollary}\label{Corollary1}
$w(n)\leq g(2n+1)$, $g(n)\leq 2(2n+1)^2w(n)+2n+1.$
\end{corollary}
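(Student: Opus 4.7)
The plan is to read both inequalities off Lemma~\ref{Lemma3} by passing to dimensions; the only subtlety is avoiding an extra factor of $n$ when bounding the middle sum.

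For the first inequality, I would apply Lemma~\ref{Lemma3}(1), which gives $e_{00}(W_n)\subseteq V^{2n+1}$. Since the $F$-linear map $A\to\til{M}_\infty(A)$ sending $a\mapsto e_{00}(a)$ is injective, it follows that $w(n)=\dim_F W_n=\dim_F e_{00}(W_n)\leq \dim_F V^{2n+1}=g(2n+1)$.

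For the second inequality, I would apply Lemma~\ref{Lemma3}(2) and bound the three pieces on the right-hand side separately. The block $M_{[-n,n]\times[-n,n]}(W_n)$ has dimension at most $(2n+1)^2 w(n)$, and the polynomial piece $\sum_{j=-n}^{n} Ft^j$ has dimension at most $2n+1$. The main point is the middle sum: a naive count over all admissible pairs $(i,j)$ gives roughly $n^2$ summands, each of dimension up to $(2n+1)w(n)$, which overshoots the target bound by a factor of $n$. To fix this, I would group terms by $j$: for each fixed $j$ with $|j|<n$, the inclusion $W_i\subseteq W_{n-|j|}$ (valid for $i\leq n-|j|$) lets me replace $\sum_{i\geq 1,\, i+|j|\leq n} M_{[-n,n]\times 0}(W_i)$ by the single block $M_{[-n,n]\times 0}(W_{n-|j|})$, which has dimension at most $(2n+1)w(n-|j|)\leq(2n+1)w(n)$. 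Right-multiplication by the fixed element $c_{0N}t^j$ cannot increase dimension, so summing over the at most $2n+1$ values of $j$ gives a total of at most $(2n+1)^2 w(n)$ for the middle sum.

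Adding the three bounds yields exactly $g(n)\leq 2(2n+1)^2 w(n)+(2n+1)$. The main obstacle (indeed the only nontrivial step) is the grouping-by-$j$ trick, which exploits monotonicity of the $W_i$ to collapse an $i$-sum of length $\sim n$ into a single $W_{n-|j|}$ before multiplying by the fixed element $c_{0N}t^j$; everything else is a routine dimension count.
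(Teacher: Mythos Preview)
Your proposal is correct and is exactly the intended derivation: the paper states Corollary~\ref{Corollary1} without proof, as an immediate consequence of Lemma~\ref{Lemma3}, and the split of the bound as $2(2n+1)^2w(n)+(2n+1)$ reflects precisely the three-piece dimension count you carry out. Your grouping-by-$j$ step, using the nesting $W_i\subseteq W_{n-|j|}$ to collapse the $i$-sum before applying the linear map $x\mapsto xc_{0N}t^j$, is the natural way to recover the factor $(2n+1)^2$ rather than $(2n+1)^3$, and matches the stated constant.
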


\section{Growth of the Algebras $A^{(c)}$}

Now we are ready to prove Theorem \ref{Theorem1}. Let $f(n)$ be an increasing function, i.e., $f(n)\leq f(n+1)$ for all $n$ and $f(n) \rightarrow \infty$ as $n\rightarrow \infty$. Let $A$ be a countable dimensional algebra whose growth is locally weakly bounded by $f(n)$. Let $h(n)$ be a superlinear function.

Let elements $b_1, b_2, \cdots$ generate the algebra $A$. Choose a sequence $\epsilon_k>0$ such that $\lim\limits_{k\rightarrow\infty}\epsilon_k=0$. Denote $V_k=span_F(b_1, \cdots, b_k)$. By the assumption, there exist constants $c_k\geq 1$, $k\geq 1$, such that
\[dim_FV_k^n\leq c_kf(c_kn)(c_kn)^{\epsilon_k}\]
for all $n\geq 1$.

Increasing $\epsilon_k$ and $c_k$ we can assume that
\begin{equation}\label{Equation1}
dim_FV_k^n \leq f(c_kn)n^{\epsilon_k}
\end{equation}

Indeed, choose a sequence $\epsilon_k'$, $k\geq 1$, such that $0<\epsilon_k<\epsilon_k'$, \linebreak $\lim\limits_{k\rightarrow\infty}\epsilon_k'=0$.

There exists $\mu_k\geq1$ such that
\[n^{\epsilon_k'-\epsilon_k}>c_k^{\epsilon_k+1}\]
for all $n>\mu_k$.

The function $f(n)$ is an increasing function. Hence, there exists $c_k'$ such that
\[f(c_k')\geq c_kf(c_ki)(c_ki)^{\epsilon_k},\]
$i=1,\cdots, \mu_k$.

Now we have
\[dim_FV_k^n\leq f(c_k'n)n^{\epsilon_k'}\]
for all $n\geq 1$.

From now on, we will assume (\ref{Equation1}) for arbitrary $k\geq1$, $n\geq 1$.

Choose an increasing sequence $n_1<n_2<\cdots$ such that $c_kn\leq h(n)$ for all $n\geq n_k$.

Define a generating sequence $c=(a_1, a_2, \cdots)$ as follows: $a_i=b_k$ if $i=n_k$; $a_i=0$ if $i$ does not belong to the sequence $n_1, n_2, \cdots$.

We will show that the growth function of $A^{(c)}$ is weakly bounded by $f(h(n))n^2$. Choose $\alpha>0$.

For an integer $n\geq n_1$, fix $k$ such that $n_k\leq n < n_{k+1}$. Then
\[W_n=span(a_{i_1}\cdots a_{i_r}|i_1+\cdots+i_r\leq n; a_{i_1},\cdots, a_{i_r} \in \{b_1, \cdots, b_k\})\subseteq V_k^n\]
Hence, $w(n)\leq f(c_kn)n^{\epsilon_k}$. From $n_k\leq n$ it follows that $c_kn\leq h(n)$. If $n$ is sufficiently large, then we also have $\epsilon_k<\alpha$. Then
\[w(n)\leq f(c_kn)n^{\epsilon_k}\leq f(h(n))n^\alpha.\]
By Lemma \ref{Lemma3} (\ref{L3I2}) we have $g(n)\leq w(n)n^2$. Therefore $g(n) \leq f(h(n))n^{\alpha+2}$.

We have $M_\infty$-embedded the algebra $A$ as a left ideal in a finitely generated algebra $B=A^{(c)}$ of growth $\leq f(h(n))n^{\alpha+2}$.

V. Markov \cite{8} showed that for a sufficiently large $n$, the matrix algebra $M_n(B)$ is 2-generated. Clearly, $M_n(B)$ has the same growth as $B$. Since $M_n(M_\infty(A)) \cong M_\infty(A)$, it follows that the algebra $A$ is $M_\infty$-embedded in $M_n(B)$ as a left ideal. This completes the proof of Theorem \ref{Theorem1}.

In order to prove Theorem \ref{Theorem2}, we will need two elementary lemmas.

\begin{lemma}\label{Lemma4}
Let $g_k(n)$, $k\geq 1$, be an increasing sequence of subexponential functions $g_k:N\rightarrow N$, $g_k(n)\leq g_{k+1}(n)$ for all $k,n$. Then there exists a subexponential function $f:N\rightarrow N$ and a sequence $1\leq n_1 < n_2 < \cdots$, such that $g_k(n)\leq f(n)$ for all $n\geq n_k$.
\end{lemma}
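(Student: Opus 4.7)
The plan is to build $f$ by a diagonal argument, patching together tails of the $g_k$ over a carefully chosen partition $[n_k,n_{k+1})$ of $\bbn$. The monotonicity condition $g_k\le g_{k+1}$ will ensure that on every interval beyond the $k$-th cutoff, the function $f$ dominates $g_k$; the subexponentiality of each individual $g_k$ will be invoked only to control $\sqrt[n]{f(n)}$ on the interval where $f$ equals $g_k$.

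Concretely, I would proceed as follows. First, since $g_k$ is subexponential, $\lim_{n\to\infty}\sqrt[n]{g_k(n)}=1$, so for each $k\ge 1$ there exists an integer $N_k$ with $g_k(n)\le e^{n/k}$ for all $n\ge N_k$. Set $n_1=N_1$ and inductively $n_{k+1}=\max(N_{k+1},n_k+1)$; this produces a strictly increasing sequence with $n_k\to\infty$ such that $g_k(n)\le e^{n/k}$ whenever $n\ge n_k$.

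Next, define $f:\bbn\to\bbn$ by $f(n)=g_1(n)$ for $n<n_1$, and $f(n)=g_k(n)$ whenever $n_k\le n<n_{k+1}$. To check the domination property, fix $k$ and take $n\ge n_k$: then $n$ lies in some interval $[n_j,n_{j+1})$ with $j\ge k$, and using monotonicity of the sequence $(g_k)$ we get
\[
f(n)=g_j(n)\ge g_k(n).
\]

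Finally, to check subexponentiality of $f$: for $n\in[n_k,n_{k+1})$,
\[
\sqrt[n]{f(n)}=\sqrt[n]{g_k(n)}\le e^{1/k}.
\]
As $n\to\infty$ the index $k$ with $n\in[n_k,n_{k+1})$ necessarily tends to $\infty$ (because $n_k\to\infty$), hence $\sqrt[n]{f(n)}\to 1$, which is equivalent to subexponentiality. There is no serious obstacle here; the only point requiring minor care is ensuring that the cutoffs $n_k$ are strictly increasing and that the interval into which a given $n$ falls has index at least $k$ whenever $n\ge n_k$, both of which are handled by the inductive definition of the $n_k$.
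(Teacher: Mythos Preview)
Your proof is correct and follows essentially the same diagonal construction as the paper: choose cutoffs $n_k$ so that $g_k(n)\le e^{n/k}$ for $n\ge n_k$, set $f(n)=g_k(n)$ on $[n_k,n_{k+1})$, and verify domination via the monotonicity $g_k\le g_{k+1}$ and subexponentiality via the bound $e^{1/k}$ on each interval. The only cosmetic differences are that the paper uses the slightly sharper inequality $g_k(n)\le \tfrac{1}{k}e^{n/k}$ and phrases subexponentiality as $f(n)/e^{\alpha n}\to 0$ rather than $\sqrt[n]{f(n)}\to 1$.
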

\begin{proof}
Choose $k\geq 1$. From $\lim\limits_{n\rightarrow\infty}\dfrac{g_k(n)}{e^{\frac{1}{k}n}}=0$, it follows that there exists $n_k$ such that $\dfrac{g_k(n)}{e^{\frac{1}{k}n}}\leq \dfrac{1}{k}$ for all $n\geq n_k$. Without loss of generality, we will assume that $n_1<n_2<\cdots$. For an integer $n\geq n_1$, let $n_k\leq n<n_{k+1}$. Define $f(n)=g_k(n)$.

We claim that $f(n)$ is a subexponential function. Indeed, let $s\geq 1$. Our aim is to show that $\lim\limits_{n\rightarrow \infty}\dfrac{f(n)}{e^{\frac{1}{s}n}}=0$.

Let $n\geq n_s$. Let $k$ be a maximal integer such that $n_k\geq n$, so $n_k\geq n<n_{k+1}$, $s\leq k$. We have
\[\dfrac{f(n)}{e^{\frac{1}{s}n}}=\dfrac{g_k(n)}{e^{\frac{1}{s}n}}\leq \dfrac{g_k(n)}{e^{\frac{1}{k}n}}\leq\dfrac{1}{k}.\]
This implies $\lim\limits_{n\rightarrow\infty}\dfrac{f(n)}{e^{\frac{1}{s}n}}=0$ as claimed. Choose $\ell \geq 1$. For all $n\geq n_\ell$, we have $n_k\leq n <n_{k+1}$, where $\ell \leq k$. Hence, $g_\ell(n)\leq g_k(n)=f(n)$. This completes the proof of the lemma.
\end{proof}

\begin{lemma}\label{Lemma5}
Let $f(n)$ be a subexponential function. Then there exists a superlinear function $h(n)$ such that $f(h(n))$ is still subexponential.
\end{lemma}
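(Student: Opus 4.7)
My first move is to reformulate subexponentiality in logarithmic form: since $f\geq 1$, the condition that $f(m)/e^{\alpha m}\to 0$ for every $\alpha>0$ is equivalent to $\epsilon(m):=\log f(m)/m\to 0$ as $m\to\infty$. Under this reformulation the target becomes to find a superlinear $h$ for which $\log f(h(n))/n=(h(n)/n)\,\epsilon(h(n))$ tends to $0$. So I need to arrange two things simultaneously: $h(n)/n\to\infty$ (superlinearity), and $(h(n)/n)\,\epsilon(h(n))\to 0$. The whole idea is to couple the speed at which $h(n)/n$ is allowed to blow up to the (a priori unknown) speed at which $\epsilon$ decays, making sure the latter wins.

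Concretely, I would build $h$ piecewise. For each integer $k\geq 1$, pick $M_k$ so that $\epsilon(m)\leq 1/k^2$ for all $m\geq M_k$; this is possible because $\epsilon(m)\to 0$. Inductively define a strictly increasing sequence $n_1<n_2<\cdots$ by $n_k:=\max(n_{k-1}+1,\lceil M_k/k\rceil)$, starting from $n_0:=0$, and set $h(n):=kn$ for $n_k\leq n<n_{k+1}$ (with, say, $h(n):=n$ for $n<n_1$).

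Both required properties will then be immediate. For $n\geq n_k$ one has $h(n)/n\geq k$, giving $h(n)/n\to\infty$, so $h$ is superlinear. For $n_k\leq n<n_{k+1}$, $h(n)=kn\geq kn_k\geq M_k$, hence $\epsilon(h(n))\leq 1/k^2$, and therefore $\log f(h(n))/n=k\cdot\epsilon(kn)\leq 1/k\to 0$ — which is exactly the statement that $f\circ h$ is subexponential.

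The main obstacle, insofar as there is one, is purely bookkeeping: one has to check that the piecewise definition of $h$ is consistent and that the threshold $kn_k\geq M_k$ really holds throughout, so that the estimate above applies uniformly on each interval $[n_k,n_{k+1})$. Both are automatic from the construction, because the freedom to space the $M_k$ arbitrarily far apart — provided by $\epsilon(m)\to 0$ — lets us absorb any finite amount of slack in the definition of $n_k$.
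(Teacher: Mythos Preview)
Your proof is correct and is essentially the same construction as the paper's: both define $h(n)=kn$ piecewise on intervals $[n_k,n_{k+1})$, choosing the thresholds $n_k$ large enough that the subexponential decay of $f$ at scale $kn$ beats the factor $k$. The only cosmetic difference is that you pass through the logarithmic reformulation $\epsilon(m)=\log f(m)/m\to 0$ and pick $M_k$ with $\epsilon(m)\le 1/k^2$ for $m\ge M_k$, whereas the paper works directly with the ratio, picking $n_k$ so that $f(kn)<\tfrac{1}{k}e^{n/k}$ for $n\ge n_k$; unwinding your condition at $m=kn$ gives exactly $f(kn)\le e^{n/k}$, so the two arguments line up step for step.
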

\begin{proof}
For an arbitrary $k\geq 1$, we have $\lim\limits_{n\rightarrow\infty}\dfrac{f(kn)}{e^{\frac{1}{k^2}kn}}=0$. Hence there exists an increasing sequence $n_1<n_2<\cdots$ such that $f(kn)<\frac{1}{k}e^\frac{n}{k}$ for all $n\geq n_k$.

For an arbitrary $n\geq n_1$, choose $k\geq 1$ such that $n_k\leq n < n_{k+1}$. Let $\mu(n)=k$. Then $h(n)=n\mu(n)$ is a superlinear function since $\mu(n)\leq\mu(n+1)$ and $\mu(n)\rightarrow\infty$ as $n\rightarrow\infty$. Choose $\alpha>0$. For a sufficiently large $n$, we have $k=\mu(n)>\frac{1}{\alpha}$. Then
\[f(n\mu(n))=f(kn)<\frac{1}{k}e^{\frac{1}{k}n}<\frac{1}{k}e^{\alpha n}.\]
Hence $\lim\limits_{n\rightarrow\infty}\dfrac{f(h(n))}{e^{\alpha n}}=0$, which completes the proof of the lemma.
\end{proof}

\begin{proof}[Proof of \Cref{Theorem2}]
Let $A$ be a countable dimensional associative algebra that is locally of subexponential growth. By Lemma \ref{Lemma4}, there exists a subexponential function $f(n)$ such that the growth of $A$ is locally asymptotically bounded by $f(n)$. By Lemma \ref{Lemma5}, there exists a superlinear function $h(n)$ such that $f(h(n))$ is still a subexponential function. By Theorem \ref{Theorem1} for an arbitrary $\alpha>0$, we can $M_\infty$-embed the algebra $A$ as a left ideal in a 2-generated algebra of growth $\leq f(h(n))n^\alpha$. A product of two subexponential functions is a subexponential function. Hence, the function $f(h(n))n^\alpha$ is subexponential. This finishes the proof of Theorem \ref{Theorem2}.
\end{proof}

\begin{proof}[Proof of \Cref{Theorem3}]
Let $A$ be a countable dimensional associative algebra of Gelfand-Kirillov dimension $d$. Then the growth of $A$ is weakly asymptotically bounded by $n^d$. The function $h(n)=n\ln n$ is superlinear. By Theorem \ref{Theorem1}, the algebra $A$ is $M_\infty$-embeddable as a left ideal is a 2-generated algebra $B$ whose growth is weakly asymptotically bounded by $(n\ln n)^dn^2$, in other words, the growth of $B$ is asymptotically bounded by $n^{d+2+\alpha}(\ln n)^d$ for any $\alpha>0$. This implies $GK\dim B\leq d+2$ and completes the proof of Theorem \ref{Theorem3}.
\end{proof}

Now let us discuss the similar theorems for semigroups: \linebreak Theorems \ref{Theorem1'}, \ref{Theorem2'}, \ref{Theorem3'}.

Let $P$ be a semigroup with $1$. Let $F$ be an arbitrary field. Consider the semigroup algebra $F[P] \wr F[t^{-1},t]$. Let $c=(a_1, a_2, \cdots)$ be a sequence of elements $a_i\in P \cup \{0\}$ that generate the semigroup $P\cup\{0\}$.

Consider the algebra $F[P]^{(c)}$ and the semigroup $P^{(c)}$ generated by $t,t^{-1},e_{11}(1), c_{0,N}$. Arguing as in the proof of Lemma \ref{Lemma1}, we see that $M_\infty(P)$ is a left ideal of the semigroup $P^{(c)}$.

Starting with an arbitrary generating sequence $b_1, b_2, \cdots$ of the semigroup $P$ and diluting it with zeros as in the proof of Theorem \ref{Theorem1}, we get a generating sequence $c=(a_1, a_2, \cdots)$ of the semigroup $P\cup\{0\}$ such that the semigroup $P^{(c)}$ has the needed growth properties. The proof just follow from the proofs of Theorem \ref{Theorem1}, \ref{Theorem2}, \ref{Theorem3}.

\bibliographystyle{amsplain}
\bibliography{ASLSGbib}

\section*{Acknowledgement}

The project of the first two authors was funded by the Deanship of Scientific Research (DSR), King Abdulaziz University. The authors, therefore, acknowledge technical and financial support of KAU.

The fourth author gratefully acknowledges the support from the NSF.

\endnotetext[1]{Department of Mathematics, King Abdulaziz University, Jeddah, SA,\\
E-mail address, ANALAHMADI@KAU.EDU.SA; HHAALSALMI@KAU.EDU.SA;}

\endnotetext[2]{Department of Mathematics, Ohio University, Athens, USA,\\
E-mail address, JAIN@OHIO.EDU;}

\endnotetext[3]{Department of Mathematics, University of California, San Diego, USA\\
E-mail address, EZELMANO@MATH.UCSD.EDU}

\theendnotes

\end{document}